\newcolumntype{Y}{>{\centering\arraybackslash}X}
\newcommand{\longsquiggly}{\xymatrix{{}\ar@{~>}[r]&{}}}
\newtheorem{thm}{Theorem}[section]
\newtheorem{lem}[thm]{Lemma}
\newtheorem{prop}[thm]{Proposition}
\theoremstyle{definition}
\newtheorem{defn}[thm]{Definition}
\theoremstyle{remark}
\newtheorem{ex}{Example}
\numberwithin{equation}{section}
\newcommand{\BIG}{\bBigg@{2}}
\newcommand{\vast}{\bBigg@{3}}
\newcommand{\Vast}{\bBigg@{5}}
\numberwithin{equation}{section}
\begin{document}


\baselineskip=17pt



\title[Skew-symmetric elements of $FG$]
{Skew-Symmetric Elements of Rational Group Algebras}

\author[D. Chaudhuri]{Dishari Chaudhuri}

\address{
	Department of Mathematical Sciences\\ Indian Institute of Science Education and Research Mohali\\
	Sector-81, Knowledge City, S.A.S. Nagar, Mohali-140306\\ Punjab, India}

\email{dishari@iitg.ac.in, dishari.chaudhuri@gmail.com}

\thanks{The author was supported by DAE (Government of India) and National Board for Higher Mathematics with reference number $2/40(16)/2016/$ R\&D-II/$5766$ during this project. The author would like to thank IISER Mohali for providing good research facilities when this project was carried out. The author is very grateful to Abhay Soman for many insightful discussions. Finally the author would like to thank the unknown referee whose valuable comments and suggestions helped a great deal in improving the exposition of this article.}

\subjclass{Primary 16S34; Secondary 16W10}

\keywords{Group Rings, Involution, Skew-Symmetric Elements, Lie ring, Central Simple Algebras.}


\begin{abstract}
	Let $RG$ be the group ring of a finite group $G$ over a commutative ring $R$ with $1$. An element $x$ in $RG$ is said to be skew-symmetric with respect to an involution $\sigma$ of $RG$ if $\sigma(x)=-x.$ A structure theorem for the skew-symmetric elements of $FG$ is given where $F$ is an algebraic extension of $\mathbb{Q}$ which generalizes some previously known results in this direction.
\end{abstract}

\maketitle
\section{Introduction}
Let $RG$ be the group ring of a finite group $G$ over a commutative ring $R$ with $1$ and let $\sigma$ be an involution of $RG$. That is, $\sigma: RG\rightarrow RG$ is such that for $x,y\in RG$, $\sigma$ satisfies the following conditions: $(i)$ $\sigma(x+y)=\sigma(x)+\sigma(y),$ $(ii)$ $\sigma(xy)=\sigma(y)\sigma(x)$ and $(iii)$ $\sigma^2(x)=x.$ Let $RG^+_\sigma=\{\gamma\in RG\;|\;\sigma(\gamma)=\gamma\}$ and $RG^-_\sigma=\{\gamma\in RG\;|\;\sigma(\gamma)=-\gamma\}$ be the set of symmetric and skew-symmetric elements of $RG$ respectively with respect to the involution $\sigma$. If $\sigma$ is an $R$-linear extension of an involution of $G$, then $RG^-_\sigma$ is generated by $\{g-\sigma(g)\;|\;g\in G\backslash G_\sigma\}\cup\{rg\;|\;r\in R_2,g\in G_\sigma\}$ as an $R$-module, where $G_\sigma=\{g\in G\;|\;\sigma(g)=g\}$ and $R_2=\{r\in R\;|\;2r=0\}$. Note that $RG^-_\sigma$ may not be a subring of $RG$ in general. Now, $RG$ may be viewed as a Lie ring with the help of the bracket operation $[x,y]=xy-yx$ for $x,y\in RG$. Then, $RG^-_\sigma$ becomes a Lie subring of $RG$. There are some strong relations between the structure of $RG^-_\sigma$ and $RG$ as Lie rings which have been studied by a few authors (for example,\cite{Amitsur-69}, \cite{Zaleskii-Smirnov-81},\cite{Giambruno-Sehgal-93}). Also there are some close relations between polynomial identities satisfied by the unitary units of $RG$ and the Lie algebra $RG^-_\sigma$ (\cite{Giambruno-Milies-03}). The question of when $RG^-_\sigma$ is commutative has been completely answered in terms of the group elements of $G$ (\cite{Jespers-Ruiz-05}, \cite{Jespers-Ruiz-06}, \cite{Cristo-Milies-07}, \cite{CJMM-09}). Lie properties of $RG^-_\sigma$ has been studied as well (\cite{Lee-Sehgal-Spinelli-09}). So we can see that the Lie algebra $RG^-_\sigma$ has been a useful object of study in quite a few cases. Hence it is interesting to investigate the structure of $RG^-_\sigma$ itself.\\

Given a semisimple group algebra $RG$, the Lie algebra $RG_\sigma^-$ will be semisimple and hence it is natural to seek a concrete description of the simple Lie algebras into which it decomposes as a direct sum. When $R=\mathbb{C}$ and $\sigma$ is the canonical involution of $\mathbb{C}G$, that is, the $\mathbb{C}$-linear extension of $g\mapsto g^{-1}$, the structure of $\mathbb{C}G^-_\sigma$ is given as a direct sum of classical simple Lie algebras by Cohen and Taylor \cite{CT-07}. The Lie algebra $\mathfrak{gl}(n)$ is the space of all linear transformations of $\mathbb{C}^n$, where the Lie product is the usual bracket operation. If $n\geq1$ and if $b$ is a non-degenerate alternating or symmetric bilinear form on $\mathbb{C}^n$, then the subspace of $\mathfrak{gl}(n)$ consisting of all $x$ such that $b(xu,v)+b(u,xv)=0$ for all $u,v\in\mathbb{C}^n$ is a Lie algebra. When $b$ is skew-symmetric, $n$ is necessarily even, and we have the \emph{symplectic} Lie algebra $\mathfrak{sp}(n)$. When $b$ is symmetric, we have the \emph{orthogonal} Lie algebra $\mathfrak{o}(n)$. Let $\chi$ be a character of $G$. It was shown in \cite{CT-07} that $\mathbb{C}G^-_\sigma$ admits the Lie algebra decomposition:

\begin{equation}
\mathbb{C}G^-_\sigma\simeq\bigoplus_{\chi\in\mathfrak{R}}\mathfrak{o}(\chi(1))\oplus\bigoplus_{\chi\in\mathfrak{Sp}}\mathfrak{sp}(\chi(1))\oplus\bigoplus_{\chi\in\mathfrak{C}}{'}\mathfrak{gl}(\chi(1))
\end{equation}

\noindent where $\mathfrak{R}, \mathfrak{Sp}$ and $\mathfrak{C}$ are the sets of irreducible characters of $G$ of real, symplectic and complex types, respectively. The prime signifies that there is just one summand $\mathfrak{gl}(\chi(1))$ for each pair $\{\chi,\overline{\chi}\}$ from $\mathfrak{C}$. A slightly more general setting has been taken into consideration by Marin \cite{M-10}, where $R=\mathbb{K}$, a field of characteristic zero such that each ordinary representation of $G$ is defined over $\mathbb{K}$ (for instance, any $\mathbb{K}$ containing the field of cyclotomic numbers) and $\sigma$ is the $\mathbb{K}$-linear extension of the map $g\mapsto\alpha(g)g^{-1}$, where $\alpha:G\rightarrow\mathbb{K}^\times$ is a multiplicative character of $G$. Here also $\mathbb{K} G^-_\sigma$ is shown to admit a similar kind of decomposition. Both the papers have used Frobenius-Schur Theory and concluded an analogue of Wedderburn decomposition for the corresponding Lie subalgebras. All these results motivated us to study these kinds of Lie algebras on a more general setting. We have considered the group algebra $FG$ of a finite group $G$ where $F$ is an algebraic extension of $\mathbb{Q}$ and its Lie subalgebra $FG^-_\sigma$ with respect to any arbitrary involution $\sigma$ of $FG$.

As a result of this study we have obtained the following results. All the notions of the statement will be recalled in section $2$.

\begin{thm}\label{main result 1}Let $FG$ be a group algebra of a finite group G over a field $F$ which is an algebraic extension of $\mathbb{Q}$ and $\sigma$ be an involution of $FG$. Then there exists an involution $\theta$ on $F$ and a right $F$-vector space $M$ which is also a left $FG$-module and a nonsingular hermitian or skew-hermitian form $h:M\times M\rightarrow F$ on $M$ (with respect to the involution $\theta$ on $F$) such that $\sigma$ is the adjoint involution of $h$ and
	\begin{equation}\label{main thm eqn}
	FG_\sigma^-=\left\{f\in FG\;|\;h(f(x),y)+h(x,f(y))=0\;\text{for all}\;x,y\in M\right\}.
	\end{equation}
\end{thm}
\vspace*{.5cm}
The $\theta$ and $M$ in the above statement have been constructed explicitly in this work (Section $3$). Our second main result is a decomposition theorem for the above Lie algebra of skew-symmetric elements.

\begin{thm}\label{main result 2} If $FG\simeq\bigoplus_{V\in \tilde{G}}End_{D}(V)$ as $F$-algebras, where $\tilde{G}$ denotes the set of all irreducible left $FG$-modules up to isomorphism with $D$ being division algebras over $F$ and the degree of the central simple algebra $End_D(V)$ over its center $Z$ is $n_V$, then $$FG_\sigma^-\simeq\bigoplus_{V\in\tilde{G}_o}\mathfrak{o}(V)\oplus\bigoplus_{V\in\tilde{G}_{sp}}\mathfrak{sp}(V)\oplus\bigoplus_{V\in\tilde{G}_{u}}{'}\mathfrak{gl}(V),$$
	as Lie algebras over $F$ where $\tilde{G}_o,\;\tilde{G}_{sp}$ and $\tilde{G}_u$ are subsets of $\tilde{G}$ such that
	\begin{eqnarray*}
		\tilde{G}_{o}&=&\{V\;|\;\sigma\left(End_D(V)\right)=End_D(V),\;dim_Z\left(End_D(V)\right)_\sigma^-=n_V(n_V-1)/2\},\\
		\tilde{G}_{sp}&=&\{V\;|\;\sigma\left(End_D(V)\right)=End_D(V),\;dim_Z\left(End_D(V)\right)_\sigma^-=n_V(n_V+1)/2,\\
		&\;& \qquad\qquad\qquad\qquad\qquad\qquad\qquad\qquad\qquad\qquad\qquad\qquad n_V\text{ is even}\},\\
		\tilde{G}_{u}&=&\{V\;|\;\sigma\left(End_D(V)\right)=End_D(V)\;or\;\sigma\left(End_D(V)\right)=End_{D^{op}}(V^{op}),\\
		&\;&\qquad\qquad\qquad\qquad\qquad\qquad\qquad\qquad\qquad dim_Z\left(End_D(V)\right)_\sigma^-=n^2_V\}.
	\end{eqnarray*}
	The prime signifies that there is just one summand $\mathfrak{gl}(V)$ for each pair $\{V,V^{op}\}$ from $\tilde{G}$.
\end{thm}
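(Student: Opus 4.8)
The plan is to combine the realization of $\sigma$ as the adjoint involution of $h$ furnished by Theorem~\ref{main result 1} with an orbit analysis of how $\sigma$ acts on the Wedderburn components $A_V:=End_D(V)$. Being an anti-automorphism of order two, $\sigma$ maps the center $Z(\mathbb{Q}G)$ to itself and hence permutes the primitive central idempotents, so it permutes the $A_V$. This partitions $\hat{G}$ into $\sigma$-fixed classes $\{V\}$ and transposed pairs $\{V,V'\}$ with $\sigma(A_V)=A_{V'}$. Writing $x=\sum_V x_V$ in components, the condition $\sigma(x)=-x$ decouples across these orbits since $\sigma(x_V)\in A_{\sigma(V)}$, and distinct orbits are annihilated by orthogonal central idempotents so that their brackets vanish. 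Hence $\mathbb{Q}G_\sigma^-$ is the Lie-algebra direct sum of its contributions on each orbit, and it suffices to identify each contribution.

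For a $\sigma$-fixed component, $\sigma|_{A_V}$ is an involution of the central simple $Z$-algebra $A_V$ of degree $n$. By the classification of involutions on central simple algebras it is either of the first kind (trivial on $Z$) or of the second kind (nontrivial on $Z$). In the first-kind case the skew part has $Z$-dimension $n(n-1)/2$ or $n(n+1)/2$, the latter forcing $n$ even; these are precisely the defining conditions for $V\in\hat{G}_o$ and $V\in\hat{G}_{sp}$, and via Theorem~\ref{main result 1} they correspond to $h$ restricting to a symmetric, respectively alternating, form on the $V$-isotypic block. By the definitions of $\mathfrak{o}$ and $\mathfrak{sp}$ recalled in the introduction the contribution is then $\mathfrak{o}(V)$, respectively $\mathfrak{sp}(V)$. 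In the second-kind case $Z^\sigma$ is an index-two subfield and the skew part is the unitary Lie algebra of degree $n$, which I identify with $\mathfrak{gl}(V)$; this places such $V$ in $\hat{G}_u$ under the alternative $\sigma(A_V)=A_V$.

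For a transposed pair I first verify that $A_{V'}\cong End_{D^{op}}(V^{op})$, so that $V'=V^{op}$, which accounts for the remaining membership in $\hat{G}_u$. On $A_V\oplus A_{V'}$ the involution takes the form $\sigma(a,b)=(\phi^{-1}(b),\phi(a))$ for an anti-isomorphism $\phi:A_V\to A_{V'}$, whence $(A_V\oplus A_{V'})_\sigma^-=\{(a,-\phi(a))\;:\;a\in A_V\}$. Projection onto the first coordinate is a $\mathbb{Q}$-linear bijection, and since $\phi$ is an anti-homomorphism one has $[\phi(a),\phi(a')]=-\phi([a,a'])$, so this projection intertwines the commutator brackets; the summand is therefore Lie-isomorphic to $A_V$ with its commutator bracket, that is, to $\mathfrak{gl}(V)$, and each pair $\{V,V^{op}\}$ contributes a single such summand, explaining the prime. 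Summing over all orbits yields the asserted decomposition.

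The step I expect to be the main obstacle is the transposed-pair analysis: pinning down that $\sigma(A_V)$ is exactly the component indexed by $V^{op}$, rather than merely an abstractly isomorphic simple algebra, and confirming that the diagonal skew subspace carries the genuine commutator bracket so that it is $\mathfrak{gl}(V)$ and not a twisted form. The first-kind orthogonal/symplectic dichotomy, the dimension counts, and the second-kind identification then follow from the standard theory of involutions together with Theorem~\ref{main result 1}.
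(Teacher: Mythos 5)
Your proposal is correct and follows essentially the same route as the paper: decompose $(\mathbb{Q}G,\sigma)$ into $\sigma$-stable simple components and swapped pairs, classify the stable ones by the kind and type of the restricted involution using the dimension counts of Lemma \ref{dimensions}, and handle the swapped pairs via the exchange involution of Lemma \ref{exachange involution}. Your explicit check that the anti-diagonal $\{(a,-\phi(a))\}$ projects Lie-isomorphically onto $A_V$ with its commutator bracket is somewhat more detailed than the paper's treatment of that case, but it is the same argument in substance.
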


The manuscript has been divided into three sections. The second section recalls some basic definitions and results to be used in Section $3$. The third section is devoted to the proof of the main theorems, where we will first decompose $FG$ as a semisimple algebra with involution into a direct product of central simple algebras with involution. On each such central simple algebra, we will construct a nonsingular hermitian or skew hermitian form with the help of its involution. This nonsingular form will give rise to a Lie algebra on that component which we will show to be exactly equal to the image of the Lie algebra of skew-symmetric elements of $FG$ under the projection of $FG$ onto that component. Finally we will extend the results similarly to the whole of $FG$.


\section{Background}

Let the pair $(A,\sigma)$ denote an algebra $A$ with $1$ over a field $F$ with $\sigma$ being an involution on $A$ and is such that $F=\mathcal{Z}(A)\cap A_\sigma^+$, where $\mathcal{Z}(A)$ is the center of the algebra $A$. We will assume throughout this section that char $F\neq2$ and algebras are always finite dimensional over the corresponding fields. Note that the map $\sigma$ preserves the center $\mathcal{Z}(A)$. For a commutative algebra $K$ over $F$, we set $[K:F]=dim_FK$.  \\

As the involution $\sigma$ is an anti-automorphism with $\sigma^2=id$, the restriction of $\sigma$ to $\mathcal{Z}(A)$ is an automorphism which is either the identity or of order $2.$  Throughout this paragraph we will assume that $(A,\sigma)$ is an $F$-algebra with involution $\sigma$ such that $A$ has no non-trivial two sided ideal $I$ with $\sigma(I)=I$. With this assumption, either $\mathcal{Z}(A)=F$ or $\mathcal{Z}(A)$ is a quadratic \'{e}tale extension of $F$ (which means that $\mathcal{Z}(A)$ is either a field which is a separable quadratic extension of $F$ or $\mathcal{Z}(A)\simeq F\times F$). One can refer to the beginning of Section $3$, \cite{BBT-18} for this paragraph. One says that $(A,\sigma)$, or the involution $\sigma$, is of the \emph{first kind} or the \emph{second kind}, respectively, according to whether $[\mathcal{Z}(A):F]$ equals $1$ or $2$. As long as the center $\mathcal{Z}(A)$ of $A$ is a field, it follows that $A$ is central simple as a $\mathcal{Z}(A)$-algebra. This follows from the observation that if $I$ is a two sided ideal of $A$, then the two sided ideal $I+\sigma(I)$ is invariant under $\sigma$. Thus $I+\sigma(I)=0$ according to the assumption in the beginning of this paragraph. This implies that $\sigma(I)=-I=I$, which in turn implies that $I=0$. Thus $A$ has no non-trivial two sided ideal and hence is simple. However, if $(A,\sigma)$ is of the second kind, then as $[\mathcal{Z}(A):F]=2$, we may also have $\mathcal{Z}(A)=F\times F.$ \\

Involutions of the first kind can be divided into two categories. Let $(A,\sigma)$ be a central simple $F$-algebra of degree $n$ with involution of the first kind and let $L$ be a splitting field of $A$ $($that is, $L$ is a field containing $F$ such that $A_L=A\otimes_FL\simeq M_n(L)\;)$. Then $\sigma$ extends to an involution of the first kind $\sigma_L=\sigma\otimes Id_L$ on $A_L$. Let $V$ be an $L$-vector space of dimension $n$. There is a nonsingular symmetric or skew-symmetric bilinear form $b$ on $V$ and $(A_L,\sigma_L)\simeq\left(End_L(V),\sigma_b\right)$, where $\sigma_b$ is the adjoint involution (for the definition of adjoint involution see Lemma \ref{Adjoint Involution} below) of the form $b$. (Proposition $2.1$, \cite{KMRT}). If $b$ is symmetric, we say the involution $\sigma$ is of \emph{orthogonal type}. If $b$ is skew-symmetric, then $\sigma$ is said to be of \emph{symplectic type}. As a parallel terminology involutions of the second kind are also sometimes said to be of \emph{unitary type}.\\

The following definition is $1.2.7$ of \cite{K}.

\begin{defn}\label{hyperbolic ring}For any ring $R$, let $R^{op}$ be the opposite ring, that is, the same additive group with reversed multiplication $$a^{op}.b^{op}=(ba)^{op},$$ where $a^{op}$ stands for $a$ as an element of $R^{op}$. Involutions on $R$ are isomorphisms $\phi: R\rightarrow R^{op}$ of rings such that $\phi^2=1_R.$ For any ring $R$, the product $R\times R^{op}$ has an involution $$(a,b^{op})\mapsto(b,a^{op}).$$ This product is called the \emph{hyperbolic ring} of $R$ and is denoted by $H(R).$ This involution is called the \emph{exchange involution}.
\end{defn}

The result below follows from Proposition $2.14$ and its proof in \cite{KMRT}. Some of the notations used are from definition \ref{hyperbolic ring} above.

\begin{lem}\label{exachange involution} Let $A$ be a semisimple $F$-algebra of the form $A_1\times A_2$, where $A_1$, $A_2$ are central simple $F$-algebras and $A$ is endowed with an involution $\sigma$ of the second kind. Then $(A,\sigma)=(A_1\times A_2,\;\sigma)\simeq (A_1\times {A_1}^{op},\;\varepsilon)$ where $\varepsilon$ is the exchange involution. That is, $(A,\sigma)\simeq \big(H(A_1),\varepsilon\big).$
\end{lem}

The following is example $1.2.8$ of Chapter $I$ of \cite{K}.

\begin{prop}\label{semisimple} Let $A$ be a semisimple $F$-algebra with involution $\sigma$. We can decompose $A$ as a product $A_1\times\cdots\times A_n\times B_1\times\cdots\times B_m,$ where the $A_i$ are simple and $\sigma(A_i)=A_i$ and the $B_j$ are products of two simple algebras $A'_j\times A''_j$ with $\sigma(A'_j)=A''_j.$ The rings $B_j$ are isomorphic to hyperbolic rings, $B_j\simeq H(A'_j).$ 
\end{prop}
\begin{proof}The proof follows from the fact that $\sigma$ being an involution, when restricted to any one of the simple components in the Wedderburn decomposition of the semisimple algebra $A$, will either take that component to itself or to another simple component due to simplicity of the domain. Thus by rearranging the simple components in the Wedderburn decomposition of $A$ according to whether $\sigma$ sends the simple component to itself or to another simple component, we get the decomposition as mentioned in the statement. Clearly, $\sigma$ restricted to $B_j$'s are involutions of the second kind where the center $\mathcal{Z}(B_j)=\mathcal{Z}(A'_j)\times\mathcal{Z}(A''_j)$ is a product of two factors $K\times K$ where $K$ is a finite field extension of $F$. The fact that $B_j\simeq H(A'_j)$ follows from Lemma \ref{exachange involution}.
\end{proof}

Let us recall the definition of a hermitian form with respect to an involution on a finite dimensional central simple algebra. (See $4.\text{A}$, \cite{KMRT}).

\begin{defn}[\textbf{Hermitian Forms}]\label{Hermitian Definition}Let $E$ be a central simple algebra over a field $F$ with char $F\neq2$ and let $M$ be a finitely generated right $E$-module. Suppose that $\theta:E\rightarrow E$ is an involution (of any kind) on $E$. A \emph{hermitian form} on $M$ (with respect to the involution $\theta$ on $E$) is a bi-additive map $h:M\times M\rightarrow E$ such that:
	\begin{enumerate}
		\item $h(x\alpha,y\beta)=\theta(\alpha)h(x,y)\beta$ for all $x,y\in M$ and $\alpha,\beta\in E,$
		\item $h(y,x)=\theta\left(h(x,y)\right)$ for all $x,y\in M$.
	\end{enumerate}
	If $(2)$ is replaced by $h(y,x)=-\theta\left(h(x,y)\right)$ for all $x,y\in M$, the form $h$ is called \emph{skew-hermitian}. If $E=F$ and $\theta=Id$, hermitian (resp. skew-hermitian) forms are the symmetric (resp. skew-symmetric) bilinear forms on the finite dimensional vector space $M$ over $F$. \\
	The hermitian or skew-hermitian form $h$ on the right $E$-module $M$ is called \emph{nonsingular} if the only element $x\in M$ such that $h(x,y)=0$ for all $y\in M$ is $x=0$.
\end{defn}

The following is Proposition $4.1$ in \cite{KMRT}. The notations used are from the above definition \ref{Hermitian Definition}.

\begin{lem}[\textbf{Adjoint Involution}]\label{Adjoint Involution}For every nonsingular hermitian or skew-hermitian form $h$ on $M$, there exists a unique involution $\sigma_h$ on $End_E(M)$ such that $\sigma_h(\alpha)=\theta(\alpha)$ for all $\alpha\in F$ and $$h\left(x,f(y)\right)=h\left(\sigma_h(f)(x),y\right)\;\text{for}\;x,y\in M,\;f\in End_E(M).$$ The involution $\sigma_h$ is called the adjoint involution with respect to $h.$
\end{lem}

Let $Z/F$ be a finite extension of fields, $E$ a central simple algebra over $Z$ and $T$ a central simple $F$-algebra contained in $E$. Suppose that $\theta$ is an involution (of any kind) on $E$ which preserves $T$.

\begin{defn}[\textbf{Involution trace}]\label{involution trace}  An $F$-linear map $s:E\rightarrow T$ is called an \emph{involution trace} if it satisfies the following conditions:
	\begin{enumerate}
		\item $s(t_1xt_2)=t_1s(x)t_2$ for all $x\in E$ and $t_1,t_2\in T;$
		\item $s(\theta(x))=\theta(s(x))$ for all $x\in E;$
		\item if $x\in E$ is such that $s(\theta(x)y)=0$ for all $y\in E$, then $x=0.$
	\end{enumerate}
\end{defn}

\begin{ex}[\textbf{Reduced Trace}]\label{reduced trace} 
	In the above definition \ref{involution trace}, if $T=F=Z$, the reduced trace $Trd_{E/F}:E\rightarrow F$ is an involution trace. The first two conditions follow from $2.2$ and $2.16 $ in \cite{KMRT} and the third condition follows from the fact that the bilinear reduced trace form $T_{E/F}:E\times E\rightarrow F$ defined by: $T_{E/F}(x,y)=Trd_{E/F}(xy)$ for $x,y\in E$ is nonsingular (Theorem 9.9, \cite{R}).
\end{ex}

\begin{ex}[\textbf{Linear Forms}]\label{linear forms}In definition \ref{involution trace}, if $E=Z$ and $T=F$, every nonzero linear map $l:Z\rightarrow F$ which commutes with $\theta$ is an involution trace. 
	Nonzero linear forms $Z\rightarrow F$ always exist if the extension $Z/F$ is separable. For example, $Trd_{Z/F}$ will be an involution trace (that is, nonzero linear form) for the trivial involution if and only if $Z/F$ is a separable field extension $($\cite{K}, Chapter $I$, $7.3.2)$.
\end{ex}


\section{Proof of the main theorems}

We begin with the Wedderburn Artin decomposition of $FG$ with involution.

\begin{prop}Let $G$ be a finite group and $\sigma$ be an involution of $FG$, where $F$ is an algebraic extension of $\mathbb{Q}$. Then as a semisimple algebra with involution, $FG$ decomposes into:
	\begin{eqnarray}\label{FG decomposition}
	\nonumber(FG,\sigma)&\simeq&(M_{a_1}(D_1),\sigma_{a_1})\times\cdots\cdots\cdots\times(M_{a_r}(D_r),\sigma_{a_r})\times\\
	\nonumber&\;&\left(M_{b_1}(D'_1)\times M_{b_1}(D'_1)^{op},\sigma_{b_1}\right)\times\cdots\cdots\cdots\cdots\cdots\\
	&\;&\qquad\qquad\qquad\cdots\times\left(M_{b_t}(D'_t)\times M_{b_t}(D'_t)^{op},\sigma_{b_t}\right),
	\end{eqnarray}
	where $\sigma_{a_i}$ are involutions of $M_{a_i}(D_i)$ and $\sigma_{b_j}$ are involutions of $M_{b_j}(D'_j)\times M_{b_j}(D'_j)^{op}$ such that $\sigma_{b_j}\big(M_{b_j}(D'_j)\times\{0\}\big)= \{0\}\times M_{b_j}(D'_j)^{op},$ for $1\leq i\leq r$ and $1\leq j\leq t.$ 	
\end{prop}

\begin{proof}As $FG$ is semisimple, by Wedderburn Structure Theorem, $FG$ is isomorphic as $F$-algebra to a product of simple algebras in the following way:
	$$FG\simeq M_{n_1}(D_1)\times M_{n_2}(D_2)\times\cdots\times M_{n_k}(D_k),$$ where $D_i$ are finite dimensional division algebras over $F$ such that the center $\mathcal{Z}\left(M_{n_i}(D_i)\right)=\mathcal{Z}(D_i)=\mathcal{Z}_i$ is a finite field extension over $F.$ Thus $M_{n_i}(D_i)$ are central simple algebras over $\mathcal{Z}_i.$  Now, by Proposition \ref{semisimple},  $(FG,\sigma)$ decomposes as $F$-algebra into
	\begin{eqnarray}\label{KG decomposition}
	\nonumber FG&\simeq&M_{a_1}(D_1)\times\cdots\cdots\times M_{a_r}(D_r)\times M_{b_1}(D'_1)\times M_{b_1}(D''_1)\\
	&\;&\times\cdots\cdots\times M_{b_t}(D'_t)\times M_{b_t}(D''_t),
	\end{eqnarray}
	where $\sigma\left(M_{a_i}(D_i)\right)=M_{a_i}(D_i)$ and $\sigma\left(M_{b_j}(D'_j)\right)=M_{b_j}(D''_j)$, for $1\leq i\leq r,\;1\leq j\leq t.$ 
	Again, by Proposition \ref{semisimple}, 
	$$(M_{b_j}(D'_j)\times M_{b_j}(D''_j),\sigma)\simeq\big(M_{b_j}(D'_j)\times M_{b_j}(D'_j)^{op},\sigma_{b_j}\big),$$ where $\sigma_{b_j}$ is the switch involution. Taking $\sigma$ restricted to $M_{a_i}(D_i)$ as $\sigma_{a_i}$ for $1\leq i\leq r$, the result follows. 
\end{proof}

All the constructions and computations henceforth will be written down for the component  $M_{n_i}(D_i)$ only in the Wedderburn decomposition of $FG$ as they are similar for the direct product $M_{b_j}(D'_j)\times M_{b_j}(D'_j)^{op}$.\\

Let us fix $i$ and denote $n_i$, $D_i$ and $\sigma_{n_i}$ as $n$, $D$ and $\sigma$ respectively. We concentrate on $(M_n(D),\sigma)$. Now, $M_n(D)$ is isomorphic as $F$-algebra to $End_D(D^n)$, where $D^{n}$ can be viewed as a right vector space over $D.$  As $M_n(D)$ is Brauer equivalent to $D$, by a well known theorem by A.A. Albert (\cite{KMRT}, Theorem $3.1$), there exists an involution, say $\theta$ on $D$ which is of the same kind as $\sigma$. Thus, according to Theorem $4.2$, \cite{KMRT}, there exists a nonsingular hermitian or skew-hermitian form $h:D^n\times D^n\rightarrow D$ with respect to $\theta$ such that $\sigma$ is the adjoint involution of $h$ (recall definition \ref{Adjoint Involution}), that is, $\sigma=\sigma_h$ and it satisfies $h(x,f(y))=h\left(\sigma_h(f)(x),y\right)$ for all $x,y\in D^n$ and $f\in End_D(D^n).$\\

Let $Z$ denote the center of $D$. Then $D$ is a finite dimensional central simple algebra over $Z$ which is a finite field extension over $F$ and $F$ again is an algebraic extension of $\mathbb{Q}$. Hence $Z/F$ is separable. By definition, $\theta$ fixes $F$ pointwise. Thus we can choose a nonzero linear form $l:Z\rightarrow  F$ (by Example \ref{linear forms}) that commutes with $\theta$ restricted to $Z$. Let $Trd_{D/Z}:D\rightarrow Z$ be the reduced trace of the central simple algebra $D$ over $Z$ which is an involution trace by Example \ref{reduced trace}. Now take the composition of these two maps to define the map $s: D\rightarrow F$ where $s=l\circ Trd_{D/Z}$.\\

\begin{lem}  With the notations as above, $s$ is an involution trace. \end{lem}

\begin{proof}We check the conditions one by one.
	\begin{enumerate}
		\item $s(\alpha_1x\alpha_2)=(l\circ Trd_{D/Z})(\alpha_1x\alpha_2)=\alpha_1s(x)\alpha_2$ for all $\alpha_1,\alpha_2\in F\subseteq Z$ and $x\in D$ follows directly from the definition of $l$ and $Trd_D$.
		\item $s(\theta(x))=(l\circ Trd_{D/Z})(\theta(x))=l(\theta(Trd_{D/Z}(x)))=\theta\left(l(Trd_{D/Z}(x))\right)=\theta(s(x))$ for all $x\in D$ as $Trd_{D/Z}$ is an involution trace and $l$ is a nonzero linear form that commutes with $\theta$ restricted to $Z$.
		\item Let $x\in D$ be such that $s(\theta(x)y)=0$ for all $y\in D$. This means $l\left(Trd_{D/Z}(\theta(x)y)\right)=0$ for all $y\in D$. Since $l$ is a nonzero linear form on a finite dimensional vector space $Z$ over $F$, we must have $Trd_{D/Z}(\theta(x)y)=0$ for all $y\in D$, which in turn implies that $x=0$ as $Trd_{D/Z}$ is an involution trace. Thus $s$ satisfies the third condition as well for being an involution trace.
	\end{enumerate}
\end{proof}
Now, applying Proposition $4.7$ in \cite{KMRT} to $E=D,\;T=F$ and $M=D^n$, we get a nonsingular hermitian or skew hermitian form (according as $h$) with respect to $\theta$ on $F$. The nonsingular form can be written as the following: 
$$s_*(h):D^n\times D^n\rightarrow F$$
such that $$s_*(h)(x,y)=s(h(x,y))\;\text{for}\;x,y\in D^n.$$ Also, $D^n$ is a right vector space over $F$ and $End_D(D^n)\subset End_F(D^n).$ The adjoint involution $\sigma_{s_*(h)}$ on $End_F(D^n)$ extends the adjoint involution $\sigma=\sigma_h$ on $End_D(D^n)$, that is, $$\left(End_D(D^n),\sigma_h\right)\subset\left(End_F(D^n),\sigma_{s_*(h)}\right).$$

In view of the above discussion Eq. \ref{FG decomposition} can be written as:
\begin{eqnarray*}
	(FG,\sigma)&\simeq&\Pi_{i=1}^r\left(End_{D_i}(D_i^{a_i}),\sigma_{a_i}\right)\times\\
	&\;&\qquad\qquad\Pi_{j=1}^t\left(End_{D'_j\times {D'_j}^{op}}\left({D'_j}^{b_j}\times\left({D'_j}^{op}\right)^{b_j}\right),\sigma_{b_j}\right)\\
	&\subset&\Pi_{i=1}^r\left(End_F(D_i^{a_i}),\sigma_{s_*(h_{a_i})}\right)\times\\
	&\;&\qquad\qquad\Pi_{j=1}^t\left(End_{F\times F}({D'_j}^{b_j}\times\left({D'_j}^{op}\right)^{b_j},\sigma_{s_*(h_{b_j})}\right),
\end{eqnarray*}
where $\sigma_{a_i}=\sigma_{h_{a_i}}$ are the adjoint involutions of the nonsingular hermitian or skew hermitian forms $h_{a_i}:D_i^{a_i}\times D_i^{a_i}\rightarrow D_i$ with respect to the involution $\theta_{a_i}$ $($which is of the same kind as $\sigma_{h_{a_i}})$ on $D_i$ and $\sigma_{b_j}=\sigma_{h_{b_j}}$ are the adjoint involutions of the nonsingular hermitian forms $h_{b_j}: \left({D'_j}^{b_j}\times\left({D'_j}^{op}\right)^{b_j}\right)\times\left({D'_j}^{b_j}\times\left({D'_j}^{op}\right)^{b_j}\right)\rightarrow D'_j\times {D'_j}^{op}$ with respect to the involution (of second kind) $\theta_{b_j}$ on $D'_j\times {D'_j}^{op}$.\\


The set $\{x\in End_D(D^n)\;|\;h(xu,v)+h(u,xv)=0\;\text{for all}\;u,v\in D^n\}$, where $h:D^n\times D^n\rightarrow D$ is a nonsingular hermitian or skew-hermitian form on $D^n$ with respect to some involution $\theta$ (of any kind) on $D$, is a Lie algebra over $F$.\\
Now, $D^n$ is a simple $FG$-module. Thus each $f\in FG$ acts on $D^n$. Thus the projection, say $f_i$ of $f$ on any of the factors $M_n(D)\simeq End_D(D^n)$ of $FG$ can be viewed as an element in $End_D(D^n)$. Now we prove the following characterization of the projection of the skew-symmetric elements on one of the summands of $FG$.

\begin{lem}\label{image} The image of $FG_\sigma^-$ under the projection of $FG$ onto $M_n(D)$ consists of all linear transformations $f_i$ in $End_D(D^n)$ such that $s_*(h)(f_i(x),y)+s_*(h)(x,f_i(y))=0$ for all $x,y\in D^n$.
\end{lem}

\begin{proof}Recall that $\sigma=\sigma_h$ is the adjoint involution of the nonsingular hermitian or skew-hermitian form $h$ on $D^n$ with respect to $\theta$, that is, $h(x,f(y))=h\left(\sigma_h(f)(x),y\right)$ for all $x,y\in D^n$ and $f\in End_D(D^n).$ If $f_i$ belongs to the image of $FG_\sigma^-$ under the projection of $FG$ onto $M_n(D)$, then $\sigma(f_i)=-f_i$, by definition of $FG_\sigma^-$. The involution $\sigma_{s_*(h)}$ of $End_F(D^n)$ extends the involution $\sigma_h$ on $End_D(D^n)$. That is, $\sigma_{s_*(h)}$ restricted to $End_D(D^n)$ is $\sigma_h$. Thus, as $f_i\in End_D(D^n)$, we have $\sigma_{s_*(h)}(f_i)=\sigma_h(f_i)=-f_i$. This implies $h(x,f_i(y))=h\left(\sigma_h(f_i)(x),y\right)=-h(f_i(x),y)$. So we get $$\sigma(f_i)=-f_i\;\Leftrightarrow\;s_*(h)(f_i(x),y)+s_*(h)(x,f_i(y))=0\;\text{for}\;x,y\in D^n.$$\end{proof}

We are now in a position to prove our main theorems. Recall definitions \ref{Hermitian Definition} and \ref{Adjoint Involution} for the proof of Theorem \ref{main result 1}.

\begin{proof}[Proof of Theorem \ref{main result 1}]
	
	Let $M=\Pi_{i=1}^{r}D_i^{a_i}\times\Pi_{j=1}^{t}\left({D'_j}^{b_j}\times\left({D'_j}^{op}\right)^{b_j}\right)$. Then $M$ is a right vector space over $F$ and also a left $FG$-module. Define $h:M\times M\rightarrow F$ by
	$$h\left(\sum_{i=1}^rx_i+\sum_{j=1}^tx'_j,\sum_{i=1}^ry_i+\sum_{j=1}^ty'_j\right)=\bigoplus_{i=1}^r s_*(h_{a_i})(x_i,y_i)\oplus \bigoplus_{j=1}^t s_*(h_{b_j})(x'_j,y'_j),$$ where $x_i,y_i\in D_i^{a_i}$ and $x'_j,y'_j\in {D'_j}^{b_j}\times\left({D'_j}^{op}\right)^{b_j}.$
	Then $h$ is a nonsingular form (hermitian or skew-hermitian) on $M$ with respect to the involution $\theta$ on $F$, where $\theta=\bigoplus_{i=1}^r\theta_{a_i}\oplus \bigoplus_{j=1}^t\theta_{b_j}$ with $\theta_{a_i}$ being $F$-restrictions of the corresponding involutions on $D_i$ for $1\leq i\leq r$ and $\theta_{b_j}$ being $F\times F$-restrictions of the corresponding involutions on $D'_j\times {D'_j}^{op}$ for $1\leq j\leq t.$ Also we have the given involution $\sigma$ on $FG$ is the adjoint involution of $h$ as by construction $\sigma$ is expressed as the sum of adjoint involutions on each simple component of $FG$ with respect to the corresponding nonsingular hermitian or skew-hermitian form on that component. With the above notations, we will show that Equation \ref{main thm eqn} of Theorem \ref{main result 1} holds.\\
	
	Let $f\in FG_\sigma^-\subset(FG,\sigma)$. Thus $f=\sum_{i=1}^rf_i+\sum_{j=1}^tf'_j$, where each $f_i$ $($resp. $f'_j)$ belongs to the image of $FG_\sigma^-$ under the projection of $FG$ onto $End_{D_i}(D_i^{a_i})$ \bigg(resp. $End_{D'_j\times {D'_j}^{op}}\left({D'_j}^{b_j}\times\left({D'_j}^{op}\right)^{b_j}\right)$\bigg). Writing $x=\sum_{i=1}^rx_i+\sum_{j=1}^tx'_j$ and $y=\sum_{i=1}^ry_i+\sum_{j=1}^ty'_j$, and using Lemma \ref{image}, we have:
	\begin{flalign*}
	&\qquad\qquad\;\sigma(f)=-f\\
	&\Longleftrightarrow \qquad \bigoplus_{i=1}^r\sigma_{h_{a_i}}(f_i)\oplus\bigoplus_{j=1}^t\sigma_{h_{b_j}}(f'_j)=-\sum_{i=1}^rf_i-\sum_{j=1}^tf'_j\\
	&\Longleftrightarrow \qquad\bigoplus_{i=1}^r\left(s_*(h_{a_i})\left(f_i(x_i),y_i\right)+s_*(h_{a_i})\left(x_i,f_i(y_i)\right)\right)\\
	&\qquad\qquad\oplus\bigoplus_{j=1}^t\left(s_*(h_{b_j})\left(f'_j(x'_j),y'_j\right)+s_*(h_{b_j})\left(x'_j,f'_j(y'_j)\right)\right)=0\\
	&\Longleftrightarrow \qquad\; h(f(x),y)+h(x,f(y))=0.
	\end{flalign*}
\end{proof}

\begin{proof}[Proof of Theorem \ref{main result 2}]
	
	It follows from Lemma \ref{image} that the image of $FG_\sigma^-$ under the projection of $FG$ onto $M_n(D)$ is a Lie algebra. Let $V=D^n$. Then V is an irreducible left $FG$-module. From Theorem $4.2$, \cite{KMRT} it follows that if $\sigma$ is of the first kind on $End_D(V)$, then $\sigma$ restricted to $End_D(V)$ is orthogonal (resp. symplectic) if the involution $\theta$ on $D$ is orthogonal (resp. symplectic) provided the corresponding nonsingular form $h:V\times V\rightarrow D$ with respect to $\theta$ on $D$  is hermitian. If the nonsingular form $h$ is skew-hermitian, then $\sigma$ restricted to $End_D(V)$ is orthogonal (resp. symplectic) if $\theta$ is symplectic (resp. orthogonal). Let the degree of the central simple algebra $End_D(V)$ over its center $Z$ be $s$. By Proposition $2.6$ and Proposition $2.17$ of \cite{KMRT} the dimension of the Lie algebra $End_D(V)_\sigma^-$ is $s(s-1)/2$ if $\sigma$ restricted to it is orthogonal and the dimension is $s(s+1)/2$ if $\sigma$ restricted to it is symplectic. Whenever the involution $\sigma$ is symplectic, $s$ is even. If $\sigma$ is of the second kind (that is of unitary type) on $End_D(V)$ or $End_{D\times D^{op}}(V\times V^{op})$, then the dimension is $s^2$. Denoting the Lie algebras $End_D(V)_\sigma^-$ in the orthogonal, symplectic and unitary cases as $\mathfrak{o}(V)$, $\mathfrak{sp}(V)$ and $\mathfrak{gl}(V)$ respectively, we get the decomposition structure of $FG_\sigma^-$ as proposed.
\end{proof}

\bibliographystyle{alpha}
\bibliography{OneBibToRuleThemAll}


\end{document}